\documentclass[10pt]{amsart}
\usepackage[totalwidth=480pt, totalheight=640pt]{geometry}
\usepackage{amsmath, amssymb, amsthm, amsfonts, mathrsfs, eucal, enumerate, rotating, lscape,layout}
\usepackage[all,tips,2cell]{xy}
\usepackage[usenames]{color}
\usepackage{xspace}
\usepackage{soul}
\usepackage{ifpdf}
\usepackage{ifthen}
\usepackage{hyperref}
\newtheorem{theorem}{Theorem}[section]
\newtheorem*{theorem*}{Theorem}
\newtheorem{corollary}[theorem]{Corollary}
\newtheorem*{corollary*}{Corollary}

\newtheorem*{claim*}{Claim}
\newtheorem*{lemma*}{Lemma}
\newtheorem{proposition}[theorem]{Proposition}
\newtheorem*{proposition*}{Proposition}
\theoremstyle{remark}
\newtheorem{remark}[theorem]{Remark}

\newtheorem{example}[theorem]{Example}
\numberwithin{equation}{section}

\newcommand{\id}{\mathrm{id}}
\newcommand{\ass}{\mathsf{a}}

\newcommand{\alman}[1]{\mathfrak{#1}}

\newcommand{\gotikl}{\alman l}
\newcommand{\gotikr}{\alman r}

\newcommand{\invass}{\mathsf{a}^{-1}}
\newcommand{\comm}{\mathsf{c}}
\newcommand{\boyut}[1]{\mathscr{#1}}
\newcommand{\unitstack}{\mathscr{I}}
\newcommand{\twounitstack}{\mathbb{I}}
\newcommand{\unit}{\mathrm{I}}

\newcommand{\twounit}{\mathcal{I}}

\newcommand{\calC}{\mathcal{C}}
\newcommand{\calD}{\mathcal{D}}
\newcommand{\TORS}{\textsc{Tors}}

\DeclareMathOperator{\Hom}{Hom}
\DeclareMathOperator{\rmD}{D}
\DeclareMathOperator{\rmC}{C}

\DeclareMathOperator{\car}{\circlearrowright}
\newcommand{\oneA}{\boyut A}

\newcommand{\oneC}{\boyut C}
\newcommand{\oneD}{\boyut D}
\newcommand{\oneI}{\boyut I}
\newcommand{\oneJ}{\boyut J}

\newcommand{\oneL}{\boyut L}
\newcommand{\oneG}{\boyut G}

\newcommand{\oneM}{\boyut M}
\newcommand{\oneP}{\boyut P}
\newcommand{\oneQ}{\boyut Q}
\newcommand{\ES}{\mathsf{S}}
\newcommand{\twoA}{\mathbb{A}}

\newcommand{\twoC}{\mathbb{C}}
\newcommand{\twoD}{\mathbb{D}}
\newcommand{\twoG}{\mathbb{G}}
\newcommand{\twoM}{\mathbb{M}}
\newcommand{\ra}{{\rightarrow}}
\newcommand{\Ra}{{\Rightarrow}}
\newcommand{\La}{{\Leftarrow}}
\newcommand{\Da}{{\Downarrow}}
\newcommand{\Ua}{{\Uparrow}}

\newcommand{\kesir}[2]{\genfrac{}{}{0pt}{}{#1}{#2}}
%special categories

%special 2-categories

\xyoption{2cell}
\xyoption{rotate}

\begin{document}
\UseAllTwocells
\title{Notes on Weak Units of Group-Like 1- and 2-Stacks}
\author{Ettore Aldrovandi}
\author{Ahmet Emin Tatar}
\address{Department of Mathematics, FSU, Tallahassee, USA}
\email{aldrovandi@math.fsu.edu}
\address{Department of Mathematics and Statistics, KFUPM, Dhahran, KSA}
\email{atatar@kfupm.edu.sa}
\date{}
\maketitle
\begin{abstract}
The weak units of strict monoidal 1- and 2-categories are defined respectively in \cite{MR2388233} and \cite{MR3035770}. In this paper, we define them for group-like 1- and 2-stacks. We show that they form a contractible Picard 1- and 2-stack, respectively. We give their cohomological description which provides for these stacks a representation by complexes of sheaves of groups. Later, we extend the discussion to the monoidal case. We consider the (2-)substack of cancelable objects of a monoidal 1-(2-)stack. We observe that this (2-)substack is trivially group-like, its weak units are same as the weak units of the monoidal 1-(2-)stack, and therefore we can recover the contractibility results in \cite{MR2388233} and \cite{MR3035770} by analyzing it.
\end{abstract}

\section{Introduction}
Saavedra in \cite{MR0338002} gives an alternative way of defining units in monoidal categories. He observes that a unit $e$ in monoidal category $\rmC$ can be defined as a cancellable idempotent object, an object $e$ with the property that tensoring with $e$ from both sides is an equivalence and that is equipped with isomorphism $\varphi:ee \ra e$. In the traditional way, a unit is an object equipped with left and right constraints (i.e. the isomorphisms $\gotikl_X:eX \ra X$ and $\gotikr_X:Xe \ra X$) satisfying some compatibility conditions. In \cite{MR2388233}, Kock analyzes these two definitions of units in a monoidal category. He calls the units defined as cancellable idempotent objects \emph{Saavedra units}, and the units extracted from the definition of bicategories with one object \emph{classical units}. He shows that these two notions of units are equivalent and the category they form is contractible.

In a subsequent work \cite{MR3035770}, Joyal and Kock carry out the discussion for units of monoidal categories to units of monoidal 2-categories. They give an alternative definition to the notion of classical unit in monoidal 2-categories. In this classical notion a unit is an object equipped with left and right constraints which are weakly invertible 1-morphisms and with a 2-isomorphism between the left and the right constraints. These data are required to satisfy certain conditions (see \cite[\S 6]{MR3035770}). On the other hand, Joyal and Kock define a unit of a monoidal 2-category as an appropriate generalization of Saavedra unit. This is an object $e$ with the property that tensoring with $e$ from both sides is biequivalence and that is equipped with the weakly invertible morphism $\varphi:ee \ra e$. Throughout this paper, we call this alternative definition of unit \emph{Joyal-Kock unit}. In \cite{MR3035770}, Joyal and Kock show, as in the 1-categorical case, that these two notions of units are equivalent and their 2-category is contractible.   

The language of Saavedra units and Joyal-Kock units in the context of group-like 1- and 2-stacks is very helpful. Their capability of expressing units without referring to left and right constraints is very beneficial if one considers the amount of data and coherence conditions required to define group-like 1- and 2-stacks. There is no need of mentioning units in their definitions because the fact \textit{every object is cancellable} is part of the group-like data and these notions of units are equivalent to classical notions. The benefits of this economical way of defining group-like 1- and 2-stacks becomes more significant when it comes to define (2-)functors. With Saavedra units and Joyal-Kock units, we don't need to assume that a unit is transferred to a unit. It is enough to assure that the (2-)functor transfers the group-like structure to the group-like structure. Another benefit of Saavedra and Joyal-Kock units can be observed when extensions are described in terms of torsors. In \cite[The\'or\`eme 3.2.2]{MR1191733}, it is shown that given an extension of a discrete group $K$ by a group-like stack $\oneG$ the $\oneG$-bitorsor associated to this extension should be trivial when it is pull backed over the unit element of $K$ so that this trivialization is compatible with other torsor structures. However in a recent work \cite{Bertolin_Tatar}, it is noticed that this trivialization condition becomes redundant if one works with Saavedra units in case of extensions of Picard stacks or with Joyal-Kock units in the case of Picard 2-stacks.

In this paper, we define the category (resp. 2-category) of Saavedra units (resp. Joyal-Kock units) in a group-like stack (resp. group-like 2-stack). We prove they are contractible. To this end we follow a much direct approach. As opposed to showing that the (2)-category of classical units of a monoidal (2)-category is contractible and showing that the (2-)category of Saavedra(Joyal-Kock) units is equivalent to the (2-)category of classical units, as it is done in \cite{MR2388233} and in \cite{MR3035770}, we give explicit construction of the morphisms between Saavedra units and Joyal-Kock units. Due to the contractibility Saavedra units and Joyal-Kock units form a Picard stack (resp. 2-stack) of their own which we denote by $\unitstack(\oneC)$ (resp. $\twounitstack(\twoC)$). We expect them to be equivalence relations with contractible quotients, i.e, to be ultimately contractible spaces. We confirm this both by a direct geometric analysis and by explicitly computing complexes of sheaves of groups that represent them. Although it immediately follows from the contractibility that these complexes should be quasi-isomorphic to the zero complex, their explicit computation is interesting because we compare them directly to the complexes representing the homotopy fiber over 1 in the Postnikov exact sequence. The comparison allows us to characterize the Saavedra units as \textit{rigid} model of the homotopy fibers over 1. We study this in more details in \cite{Aldrovandi_Tatar}.

Lastly, we discuss the weak units of monoidal 1- or 2-stacks. We observe that cancelable objects of a monoidal 1-(2-)stack form a  group-like (2-)substack that can be presented as the homotopy cokernel of a (2-)crossed-module. Directly from the definition of weak units, it follows that the weak units of a monoidal 1-(2-)stack and the weak units of the group-like (2-)substack of cancelable objects coincide. Therefore it suffices to study the group-like (2-)substack of cancelable objects of a monoidal 1-(2-)stack to obtain the contractibility results in \cite{MR2388233} and \cite{MR3035770}. This observation also shows that the weak units of a group-like 1-(2-) stack is the case to study.

\paragraph{\textbf{Organization of the paper.}} In section \ref{section:recall}, we quickly recall Saavedra units and Joyal-Kock units of group-like 1- and 2- categories. In section \ref{section:stack_of_units}, we examine Saavedra units of group-like stacks. We show that Saavedra units of a group-like stack $\oneC$ form a contractible Picard stack $\unitstack(\oneC)$. We give a cocyclic description of Saavedra units from which we deduce a complex of abelian sheaves such that the Picard stack associated to it is equivalent to $\unitstack(\oneC)$. We finish this section by extending the discussion to monoidal stacks. In section \ref{section:2stack_of_units}, we follow the same plan as in section \ref{section:stack_of_units} for Joyal-Kock units.

\paragraph{\textbf{Notation and Conventions.}} We work with strict 2-categories. A 2-groupoid is a 2-category whose 1-morphisms are weakly invertible and 2-morphisms are isomorphisms. A 2-functor is used in the sense of \cite{MR0364245}. For compactness in the diagrams, we denote the tensoring operation in any category by juxtaposition. The usual notation $\otimes$ is used in the names of functors (i.e. $X \otimes -$ denotes the functor tensoring by $X$) and in cases to avoid ambiguities. We use capital roman letters for categories ($\rmC$, $\rmD$, $\hdots$), calligraphic letters for 2-categories ($\calC$, $\calD$, $\hdots$), script letters for stacks ($\oneC$, $\oneD$, $\hdots$) and double letters for 2-stacks ($\twoC$, $\twoD$, $\hdots$).

\section{Quick Recall on Weak Units}\label{section:recall}
In this section, we recall briefly the weak units of group-like 1- and 2- categories. The main references are \cite{MR2388233} and \cite{MR3035770} where these units are defined for strict monoidal 1- and 2- categories. For definitions of group-like 1- and 2-categories, we refer to \cite{MR1301844}.
\subsection{Saavedra Units}

Let $\rmC$ be a group-like category. A pair $(e,\varphi)$ is called a unit element where $e$ is an object and $\varphi: e e \ra e$ is an isomorphism in $\rmC$ . A unit morphism $(e_1,\varphi_1) \ra (e_2,\varphi_2)$ is given by an isomorphism $u:e_1 \ra e_2$ in $\rmC$ such that the diagram
\begin{equation*}\label{diagram:morphism_of_units}
%\begin{tabular}{c}
\xymatrix{e_1 e_1 \ar@{}[dr]|{\car}\ar[r]^{u u} \ar[d]_{\varphi_1} & e_2 e_2 \ar[d]^{\varphi_2}\\ e_1 \ar[r]_{u} & e_2}
%\end{tabular}
\end{equation*}
commutes. This defines the groupoid of Saavedra units $\unit(\rmC)$.

In \cite{MR2388233} these units are called Saavedra units since they were first mentioned by Saavedra in \cite{MR0338002}. Since the classical notion of unit extracted from the definition of monoidal category is equivalent to the notion of Saavedra unit and $\unit(\rmC)$ is contractible (\cite[Proposition 2.19]{MR2388233}), $\unit(\rmC)$ is always a Picard category. However, if $\rmC$ is braided, we can define the tensor product of Saavedra units without referring to the contractibility. If $(e_1,\varphi_1)$ and $(e_2,\varphi_2)$ are two Saavedra units, then $(e_1,\varphi_1)\otimes (e_2,\varphi_2):=(e_1e_2,\varphi)$ where $\varphi$ is the composition 
\begin{equation}\label{morphism:additive structure}
\xymatrix{(e_1e_2)(e_1e_2) \ar[r]^{\invass} &((e_1e_2)e_1)e_2 \ar[r]^{\invass \comm \ass} & ((e_1e_1)e_2)e_2 \ar[r]^{\ass} & (e_1e_1)(e_2e_2) \ar[r]^(0.6){\varphi_1\varphi_2} &e_1e_2},
\end{equation}
with $\invass \comm \ass$ given by
\begin{equation}\label{morphism:aca}
\xymatrix@1{((e_1e_2)e_1)e_2 \ar[r]^{\invass} & (e_1(e_2e_1))e_2 \ar[r]^{\comm} & (e_1(e_1e_2))e_2  \ar[r]^{\ass} &((e_1e_1)e_2)e_2}.
\end{equation}
The isomorphisms $\ass$ and $\comm$ represent the associativity and the braiding constraints, respectively. There is a choice involved in the definition of $\varphi$, but any two such choices are connected by a unique isomorphism. This unique isomorphism would be the pasting of the isomorphisms of the braiding. We also note that our definition coincides with the one in \cite[\S2.21]{MR2388233} if one assumes strict associativity and uses the compatibility between the braiding constraint and left and right unit constraints.

In \cite{Deligne}, Deligne points out that a Picard category $\rmC$ always has a Saavedra unit, since tensoring by an object $X$ in $\rmC$ is an equivalence (see the proof of Proposition (\ref{proposition:unit_exists_in_Picard_2_category})).  

\subsection{Joyal-Kock Units} \label{section:units_of_picard_2-categories}

Let $\calC$ be a group-like 2-category. A pair $(e, \varphi)$ is called a unit element in $\calC$ where $e$ is an object and $\varphi: e e \ra e$ is a weakly invertible 1-morphism in $\calC$. A unit 1-morphism $(e_1,\varphi_1) \ra (e_2,\varphi_2)$ is given by a pair $(f,\theta_f)$ where $f:e_1 \ra e_2$ is a weakly invertible 1-morphism and $\theta_f$ is the 2-isomorphism

\begin{equation*}\label{diagram:unit_morphism}
%\begin{tabular}{c}
\xymatrix{e_1 e_1 \ar[r]^{f  f} \ar[d]_{\varphi_1} \ar@{}[dr]|{\Ua \theta_f}& e_2 e_2 \ar[d]^{\varphi_2}\\
            e_1 \ar[r]_{f} & e_2}
%\end{tabular}
\end{equation*}

A unit 2-morphism $(f,\theta_f) \Ra (g,\theta_g)$ is given by a 2-isomorphism $\delta: f \Ra g$ in $\calC$ such that
\begin{equation*}\label{diagram:unit_2-morphism}
%\begin{tabular}{c}
\xymatrix{e_1 e_1 \ar[r]_{f  f}^{\Ua \delta \delta} \ar[d]_{\varphi_1} \ar@/^0.5cm/[r]^{g  g} \ar@{}[dr]|{\Ua \theta_f} & e_2 e_2 \ar[d]^{\varphi_2}\ar@{}[dr]|{=}&e_1 e_1 \ar[r]^{g g} \ar[d]_{\varphi_1} \ar@{}[dr]|{\Ua \theta_g} & e_2 e_2 \ar[d]^{\varphi_2}\\ 
            e_1 \ar[r]_{f}& e_2 & e_1 \ar[r]^{g}_{\Ua \delta} \ar@/_0.5cm/[r]_{f}& e_2}
%\end{tabular}
\end{equation*}

Unit elements, unit 1-morphisms, and unit 2-morphisms of a group-like 2-category $\calC$ form the 2-groupoid $\twounit(\calC)$ of Joyal-Kock units. We define the tensor product on $\twounit(\calC)$ in the same way as the one on $\unit(\rmC)$. As for Saavedra units, $\twounit(\calC)$ is always a Picard 2-category. However if $\calC$ is group-like, then $\twounit(\calC)$ is not empty. 

\begin{proposition}\label{proposition:unit_exists_in_Picard_2_category}
A group-like 2-category $\calC$ always has a Joyal-Kock unit.
\end{proposition}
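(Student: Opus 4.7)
The plan is to adapt Deligne's argument for Picard 1-categories, invoked in the text just before the statement. The central fact is that in a Picard 2-category, tensor multiplication by any object $X$ is a biequivalence $X \otimes - \colon \calC \to \calC$: it is biessentially surjective on objects and induces an equivalence of Hom categories $\Hom(A,B) \to \Hom(XA,XB)$ for every pair $(A,B)$.

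First I pick any object $X \in \calC$ (a Picard 2-category being nonempty by convention). Applying biessential surjectivity of $X \otimes -$ to $X$ itself yields an object $e$ together with a weakly invertible 1-morphism $\alpha \colon Xe \to X$. This $e$ will serve as the underlying object of the unit.

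Next I build a 1-morphism $\varphi \colon ee \to e$. In $\Hom(X(ee), Xe)$ one can assemble the composite
\begin{equation*}
X(ee) \longrightarrow (Xe)e \xrightarrow{\alpha \otimes \id_e} Xe,
\end{equation*}
where the first arrow is the associator of the Picard structure. Because $X \otimes - \colon \Hom(ee, e) \to \Hom(X(ee), Xe)$ is an equivalence of categories, I transport this composite back through a quasi-inverse to produce a 1-morphism $\varphi \colon ee \to e$ together with a 2-isomorphism witnessing that $X \varphi$ agrees with the displayed composite. Since $\calC$ is a 2-groupoid, $\varphi$ is automatically weakly invertible, and the pair $(e,\varphi)$ satisfies the definition of Joyal-Kock unit element recalled in \S\ref{section:units_of_picard_2-categories}.

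The main subtlety is bookkeeping rather than genuine difficulty: one must fix a pseudoinverse and coherence data for $X \otimes -$ in order to produce an honest object $e$ and an honest 1-morphism $\varphi$, rather than data defined only up to equivalence. But this is standard for biequivalences of 2-categories, and the Joyal-Kock definition demands no coherence on $\varphi$ beyond its being a 1-morphism, so no additional diagram chase is required.
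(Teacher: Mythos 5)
Your proof is correct and follows essentially the same route as the paper's: use biessential surjectivity of tensoring to get $e$ with a 1-morphism $e X \to X$ (you tensor on the other side, which is immaterial in the Picard setting), then whisker with $e$, compose with the associator, and pull back through the equivalence of Hom-categories induced by tensoring with $X$ to obtain $\varphi\colon ee \to e$. The only difference is cosmetic (left versus right tensoring and your explicit remark about fixing a quasi-inverse), so no further comment is needed.
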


This result is not surprising since group-like 2-categories have classical units that are equivalent to Joyal-Kock units \cite[Theorem E]{MR3035770}. We give a proof of this fact without referring to this equivalence. 

\begin{proof}[Proof of Proposition \ref{proposition:unit_exists_in_Picard_2_category}]
For any object $X$ in $\calC$, the 2-functor $- \otimes X$ from $\calC$ to $\calC$ is a biequivalence. Therefore, for any $X \in \calC$ there exists $e_X \in \calC$ with a  1-morphism $f:e_X X \ra X$. $\id_{e_X} \otimes f$ is a 1-morphism in the category $\Hom_{\calC}(e_X (e_X X),e_X X)$. $\invass_{e_X,e_X,X} \circ (\id_{e_X} \otimes f)$ is a 1-morphism in the category $\Hom_{\calC}((e_X e_X) X, e_X X)$ which is equivalent to $\Hom_{\calC}(e_X e_X,e_X)$, since tensoring is a biequivalence. We define $\varphi: e_X e_X \ra e_X$ as the image of $\id_{e_X} \otimes f$ under this equivalence. 
\end{proof}

\section{Weak Units of Group-Like Stacks}\label{section:stack_of_units}
We define weak units of a group-like stack $\oneC$. We call these units Saavedra units of $\oneC$. We show that there exists unique isomorphism between the Saavedra units thereby they form a Picard stack $\unitstack(\oneC)$. We give cohomological description of Saavedra units from which we obtain a complex of abelian sheaves that represents $\unitstack(\oneC)$. We end this section by extending the discussion to monoidal case.
\subsection{Saavedra Units of a Group-Like Stack}\label{section:saavedra_units_in_picard_stack}
We consider a group-like stack $\oneG$. By \cite[Theorem 5.3.6]{Aldrovandi2009687}, we assume that $\oneG$ is represented by a  crossed-module $\lambda:G \ra H$ of sheaves of groups. Hence, $\oneG$ can be modeled by $\TORS(G,H)$, the group-like stack of $(G,H)$-torsors. For details of $\TORS(G,H)$, we refer to \cite{MR1086889} and \cite{MR546620}. Here, we give a brief reminder. A $(G,H)$-torsor is a pair $(L,x)$, where $L$ is an $G$-torsor and $x:L \ra H$ is a $G$-equivariant morphism of sheaves. A morphism between two pairs $(L,x)$ and $(K,y)$ is a $G$-equivariant morphism of sheaves $\psi: L \ra K$ such that the diagram

\begin{equation*}\label{diagram:commutative_morphism_1}
%\begin{tabular}{c}
\xymatrix@1{L  \ar[dr]_x \ar[rr]^{\psi} & \ar@{}[d]|(0.4){\car}& K  \ar[dl]^y\\
            &H&}
%\end{tabular}
\end{equation*}
commutes. The tensor product on $\TORS(G,H)$ is
\begin{equation}\label{tensor_product}
(L,x) \otimes (K,y):=(L \wedge^{G} K, x \wedge y),
\end{equation}
where $L \wedge^{G} K$ is the contracted product and $x \wedge y$ is the $G$-equivariant morphism from $L \wedge^{G} K$ to $H$ given by $x(l)y(k)$ with $(l,k)$ in $L \wedge^{G} K$.

A \emph{Saavedra unit} in $\TORS(G,H)$ is an idempotent $(G,H)$-torsor, that is, a $(G,H)$-torsor $(I,x)$ with a $(G,H)$-torsor morphism
\begin{equation*}
\varphi:\xymatrix@1{(I,x) \otimes (I,x) \ar[rr] && (I,x).}
\end{equation*}
In other words, we have a $(G,H)$-torsor morphism
\begin{equation*}\label{idempotent}
\varphi:\xymatrix@1{(I \wedge^G I,x^2) \ar[rr] && (I,x).}
\end{equation*}
We denote these units  by $((I,x),\varphi)$.

A \emph{morphism of Saavedra units} in $\TORS(G,H)$
\[\xymatrix@1{((I,x),\varphi) \ar[rr] && ((J,y),\sigma)},\]
is given by a $(G,H)$-torsor morphism $\psi:(I,x) \ra (J,y)$ satisfying the commutative diagram
\begin{equation*}\label{diagram:morphism_of_saavedra_units}
%\begin{tabular}{c}
\xymatrix{(I \wedge^G I,x^2) \ar@{}[dr]|{\car} \ar[r]^(0.48){\psi \wedge \psi} \ar[d]_{\varphi} & (J \wedge^G J,y^2) \ar[d]^{\sigma}\\(I,x) \ar[r]_{\psi} & (J,y)}
%\end{tabular}
\end{equation*}
This defines the groupoid of Saavedra units of a group-like stack $\oneG$. We denote it by $\unitstack(\oneG)$. 

\begin{example}\label{kernel_saavedra_unit}
The trivial $(G,H)$-torsor $(G,1)$ is a Saavedra unit and so is any $(G,H)$-torsor globally isomorphic to it.
\end{example}

\subsection{Contractibility of Saavedra Units}\label{section:contractibility_of_saavedra_units} In this section, we prove that the category of Saavedra units of a group-like stack is contractible by directly constructing the unique isomorphism between any two Saavedra units.
\begin{proposition}\label{the_proposition}
All Saavedra units of $\oneG$ are uniquely isomorphic to each other. That is, $\unitstack(\oneG)$ is a contractible groupoid over the site $\ES$.
\end{proposition}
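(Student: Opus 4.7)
I would prove contractibility by a local analysis on the site $\ES$, using the explicit model $\TORS(A,B)$ of $\oneC$ to reduce the construction of the unique isomorphism between two Saavedra units to an abelian-group computation on a trivializing cover, and then descending back to a global morphism. Concretely, since the notion of Saavedra unit is local in nature (idempotence of an $(A,B)$-torsor is a local condition on the underlying torsor), the problem of producing the unique morphism $\psi : ((L_1,x_1),\varphi_1) \to ((L_2,x_2),\varphi_2)$ may, after passing to a common cover $V \to U$ on which both $L_1$ and $L_2$ trivialize, be recast entirely in terms of the trivial torsor. On such a cover Example \ref{kernel_saavedra_unit} already classifies the Saavedra unit structures on $(A,s)$: every $\varphi : (A \wedge^A A, 2s) \to (A,s)$ is translation by some $\alpha \in A(V)$ composed with the canonical map $\Phi$ of (\ref{canonical_morphism}), and the equivariance constraint forces $\lambda(\alpha) = s$. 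Thus, up to trivialization, each Saavedra unit is encoded by a single element $\alpha_i \in A(V)$.

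Next I would solve the morphism problem locally. A candidate morphism $\psi : (A, \lambda(\alpha_1)) \to (A, \lambda(\alpha_2))$ of $(A,B)$-torsors is again translation by some $\beta \in A(V)$. Substituting this $\psi$ into diagram (\ref{diagram:morphism_of_saavedra_units}) and collapsing $A \wedge^A A$ to $A$ via $\Phi$ reduces the Saavedra unit compatibility to the single equation $\beta = \alpha_1 - \alpha_2$ in $A(V)$. The $A$-equivariance constraint $\lambda(\beta) = \lambda(\alpha_1) - \lambda(\alpha_2)$ then holds automatically, so the local morphism exists and is unique; in particular the local automorphism groupoid is trivial, since taking $\alpha_1 = \alpha_2$ forces $\beta = 0$.

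Finally, the formula $\beta = \alpha_1 - \alpha_2$ is manifestly compatible with further restrictions, so the locally defined morphisms satisfy the cocycle condition on overlaps of two different trivializations, and the stack structure of $\TORS(A,B)$ glues them into a unique global Saavedra unit morphism $\psi : ((L_1,x_1),\varphi_1) \to ((L_2,x_2),\varphi_2)$. The one delicate point I expect is the calculation in the second step: carefully tracking the identification $A \wedge^A A \cong A$ induced by $\Phi$ and verifying that diagram (\ref{diagram:morphism_of_saavedra_units}) really does collapse to the clean equation $\beta = \alpha_1 - \alpha_2$ once the two maps $\varphi_1 \circ \psi$ and $\psi \circ (\psi \wedge \psi) \circ \varphi_2^{-1}$ are written out in terms of translations. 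Once that local calculation is pinned down, both uniqueness and the descent step are formal consequences of the stack axioms.
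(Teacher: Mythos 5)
Your argument is correct, but it is packaged differently from the paper's. The paper avoids descent altogether: from a local section $u$ of $L$ and the unique $a_{\varphi}$ with $\varphi(u,u)=u+a_{\varphi}$ (so that $x(u)=\lambda(a_{\varphi})$, your constraint $\lambda(\alpha)=s$), it records the transformation law $a'_{\varphi}=a_{\varphi}+\alpha$ under a change of section $u'=u+\alpha$, equation (\ref{the_proposition_eqn_5}), and concludes that $s=u-a_{\varphi}$ is independent of the choice, hence a canonical \emph{global} section of $L$ with $x(s)=0_B$ and $\varphi(s,s)=s$; the unique morphism between two units is then simply the one sending canonical section to canonical section. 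In effect every Saavedra unit is canonically trivialized, i.e.\ canonically isomorphic to $((A,0),0)$ of Example \ref{kernel_saavedra_unit}, and this canonical section is reused later (cocyclic description, 2-stack case). You instead trivialize locally, encode each unit structure by $\alpha_i\in A(V)$, solve the morphism problem locally (your $\beta=\alpha_1-\alpha_2$ is exactly what diagram (\ref{diagram:morphism_of_saavedra_units}) collapses to, and it reappears in Remark \ref{other_representations}), and glue. The arithmetic is the same; the one step you treat too lightly is the gluing: what must be checked on overlaps is not compatibility with restriction but agreement of morphisms built from \emph{different} trivializations, whose $\alpha$'s differ precisely by the transformation law above. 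This does hold, and with what you already have the cleanest justification is local uniqueness itself: since the Hom-presheaf of $\TORS(A,B)$ is a sheaf and condition (\ref{diagram:morphism_of_saavedra_units}) is local, the locally constructed morphisms, each being the unique unit morphism over its member of the cover, automatically coincide on overlaps and descend, and the resulting global morphism is unique for the same reason. With that sentence added your proof is complete; the trade-off is that the paper's canonical-section argument yields a global rigidification of each unit for free and with no descent, while yours is a more routine local-to-global argument that transfers unchanged to any setting where local existence and uniqueness can be established.
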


\begin{proof}
Let $((I,x),\varphi)$ be a Saavedra unit of $\oneG$. We note that $(I,x)$ is globally isomorphic to $(G,1)$. In fact, let $u$ be a local section of $I$. Since $I$ is locally isomorphic to $G$, there exists a unique $g_{\varphi}(u)$ in $G$ such that $\varphi(u,u)=u.g_{\varphi}(u)$. It is banal to observe that $g_{\varphi}$ is the required global isomorphism of $(G,H)$-torsors between $(I,x)$ and $(G,1)$. We leave it to the reader to see that upon choosing another local section $u'$ of $I$, we get a global section $s$ of the Saavedra unit $((I,x),\varphi)$. For any two Saavedra units $((I,x),\varphi)$ and $((J,y),\sigma)$ of $\oneG$  with global sections $s$ and $t$, respectively, the morphism
\begin{equation*}\label{the_proposition_eqn_7}
\psi:\xymatrix@1{((I,x),\varphi) \ar[rr] && ((J,y),\sigma)}, 
\end{equation*}
defined by sending $s$ to $t$ provides the isomorphism. The uniqueness of $\psi$ follows from the fact that $s$ and $t$ are uniquely determined by $\varphi$ and $\sigma$, respectively. 
\end{proof}

From the uniqueness of the isomorphism in Proposition \ref{the_proposition}, it follows 
\begin{corollary}\label{corollary:Saavedra_units_form_stack}
$\unitstack(\oneG)$ is a Picard stack.
\end{corollary}

\subsection{The Cocyclic Description of a Saavedra Unit}\label{section:cocycles_saavedra}
Let $((I,x),\varphi)$ be a Saavedra unit of $\oneG$ over $U$ and $V_{\bullet} \ra U$ a hypercover. We assume $\oneG$ is represented by the crossed-module $\lambda:G \ra H$. Let $u \in I(V_0)$ be a local section. A simple cocycle calculation shows that the collection $(g,g_{\varphi}(u),h)$ where $g \in G(V_1)$, $g_{\varphi}(u) \in G(V_0)$, and $h \in H(V_0)$ satisfying the relations
\begin{equation}\label{cocycles_2}
d_2^*(g)d_0^*(g)=d_1^*(g),
\end{equation}
\begin{equation}\label{cocycles_3}
d_0^*(h)=d_1^*(h)\lambda(g),
\end{equation}
\begin{equation}\label{cocycles_7}
gd_0^*(g_{\varphi}(u))=d_1^*(g_{\varphi}(u))d_1^*h^{-1}\lambda(g)d_1^*h\lambda(g),
\end{equation}
\begin{equation}\label{cocycles_8}
\lambda(g_{\varphi}(u))=h,
\end{equation}
represent the Saavedra unit $((I,x),\varphi)$. The collection $(g,g_{\varphi}(u),h)$ is a 1-cocycle with values in the crossed-module
\begin{equation}\label{cocycles_9}
(\id^{-1}_G,\lambda): \xymatrix@1{G \ar[rr] && \ker(\id_H\lambda)}
\end{equation}
defined by $g \mapsto (g^{-1},\lambda(g))$. The action of $\ker(\id_H\lambda)$ on $G$  is $g^{(g',h')}:=g^{h'}$ for any $g \in G$ and $(g',h') \in \ker(\id_H\lambda)$. 

If we chose another local section $u' \in I(V_0)$, we find another 1-cocycle cohomologous to $(g,g_{\varphi}(u'),h)$. Therefore the set of equivalence classes of 1-cocycles with values in the morphism (\ref{cocycles_9}) classify Saavedra units. In fact, the equivalence classes form a group which we denote by $\mathrm{H}^0(*,G\ra \ker(\id_H\lambda))$. Here $*$ represents the final object of the topos of sheaves on $\ES$, i.e. the sheaf whose value is the point at each object of $\ES$. The group structure, induced by the crossed-module structure on (\ref{cocycles_9}), is defined by $(g_1,g'_1,h_1)(g_2,g_2',h_2)=(g_1^{d_0^*h_2}g_2,g_1'^{h_2}g_2',h_1h_2)$. 

\subsection{A Complex of Sheaves defining the Stack of Saavedra Units}\label{section_complex}
Since $\unitstack(\oneG)$ is a contractible Picard stack, it is represented by the class of zero complex $0 \ra 0$. By \cite[Lemme 1.4.13]{Deligne}, any complex quasi-isomorphic to the zero complex provides a representation of $\unitstack(\oneG)$, as well. The following complexes arise naturally in the realm of Saavedra units.
\begin{enumerate}
\item From the cocyclic description of Saavedra units, we know that $\mathrm{H}^0(*,G\ra \ker(\id_H\lambda))$ classify Saavedra units of $\oneG$. We observe that the crossed-module (\ref{cocycles_9}) is equipped with the bracket operation
\begin{equation}\label{bracket}
\{-,-\}:\xymatrix@1{\ker(\id_H\lambda) \times \ker(\id_H\lambda) \ar[rr] && G}
\end{equation}
defined by $\{(g_1,h_1),(g_2,h_2)\} \mapsto g_1g_2g_1^{-1}g_2^{-1}$ which is symmetric, i.e 
\[\{(g_1,h_1)(g_2,h_2)\}\{(g_2,h_2)(g_1,h_1)\}=1,\] 
and Picard, i.e. $\{(g,h)(g,h)\}=1$. Hence, the stack associated to the crossed-module (\ref{cocycles_9}) is Picard and equivalent to $\unitstack(\oneG)$.  We also note that (\ref{cocycles_9}) is the soft truncation of
\begin{equation*}\label{cocycles_10}
\xymatrix@1{G\ar[rr]^(0.42){(\id_G^{-1},\lambda)} && G \ltimes H \ar[rr]^(0.55){(\id_H\lambda)} && H},
\end{equation*}
which is the cone of the morphism $\id_{\oneG}:\oneG \ra \oneG$.
\item From Proposition \ref{the_proposition} and Example \ref{kernel_saavedra_unit}, we deduce that $\ker(\lambda)$ parametrizes Saavedra units of $\oneG$ and their morphisms. Hence, the Picard stack associated to the morphism $\id_{\ker(\lambda)}:\ker(\lambda) \ra \ker(\lambda)$ is equivalent to $\unitstack(\oneG)$.
\end{enumerate}
\begin{remark}
If $\oneA$ is a Picard stack, then $\oneA$ can be represented by a class of abelian sheaf morphisms $\lambda:A \ra B$. In this case, the Saavedra units of $\oneA$ are classified by the abelian group $\mathrm{H}^0(*,A\ra \ker(\id_B\lambda))$. 
\end{remark}
\begin{remark}
We would like to point out the relation between $\unitstack(\oneG)$ the Saavedra units of a (group-like) stack $\oneG$ and $\oneG_1$ the connected components of the identity in $\oneG$. There exists a functor from $\unitstack(\oneG)$ to $\oneG_1$ defined by forgetting the morphism $\varphi:XX \ra X$. $\oneG_1$ has a richer structure which makes it more interesting than $\unitstack(\oneG)$. More details about $\oneG_1$ can be found in \cite{Aldrovandi_Tatar}.
\end{remark}
\subsection{Monoidal Case}\label{monoidal_case}
Let $\oneM$ be a monoidal stack fibered in groupoids over $\ES$. A \emph{Saavedra unit} of $\oneM$ over $U \in \ES$ is a pair $(e,\varphi)$ where $e$ is a cancelable object, i.e. $-\otimes e, e \otimes -:\oneM \ra \oneM$ are fully faithful and $\varphi:ee \ra e$ is an isomorphism. In fact, the functors $-\otimes e, e \otimes -$ are equivalences. For any object $X$, the pre-image of the isomorphism $(\varphi \otimes \id_X)\circ \invass_{e,e,X}$ under the isomorphism $\Hom(eX,X) \ra \Hom(e(eX),eX)$ provides an isomorphism between $X$ and $eX$. We observe that the only difference between a Saavedra unit $(e,\varphi)$ of a monoidal stack and a group-like stack is that in a group-like stack $e$ does not need to be assumed to be cancelable as all objects in a group-like stack are cancelable. From this observation, if $\oneG$ is the substack of cancelable objects of $\oneM$ which is trivially group-like, then the stack $\unitstack(\oneM)$ of Saavedra units of $\oneM$ and the stack $\unitstack(\oneG)$ of Saavedra units of $\oneG$ coincide. Thus, being contractible in $\unitstack(\oneM)$ is same as being contractible in $\unitstack(\oneG)$. Since by Proposition \ref{the_proposition}, $\unitstack(\oneG)$ is contractible, we obtain
\begin{corollary}
$\unitstack(\oneM)$ is contractible groupoid over $\ES$. Hence, it is a Picard stack over $\ES$.
\end{corollary}

This corollary is sheafification of (\cite[Proposition 2.19]{MR2388233}) and proposes a different approach to the same end. One shall note that, in \cite{MR2388233} Kock proves the contractibility of category of Saavedra units by first proving that the category of classical units are contractible then showing that the categories are equivalent. Whereas we only work with Saavedra units without any mention of classical units and we deduce the contractibility of Saavedra units in a monoidal stack from the contractibility of Saavedra units of a group-like stack which we can represent by a crossed-module. 

\section{Weak Units of Group-Like 2-Stacks}\label{section:2stack_of_units}
In this section, we follow the same plan as in section \ref{section:stack_of_units}. We define weak units of a group-like 2-stack $\twoG$ associated to a 2-crossed-module. We call them Joyal-Kock units of $\twoG$. We show that Joyal-Kock units are equivalent to each other up to a unique 2-isomorphism thereby they form a Picard 2-stack $\twounitstack(\twoG)$. We give the cocyclic description of Joyal-Kock units. We compute a complex of sheaves that represent $\twounitstack(\twoG)$. We conclude by discussing the Joyal-Kock units of a monoidal 2-stack.

\subsection{Joyal-Kock Units of a Group-Like 2-Stack}\label{section:joyal_kock_units_in_picard_2_stack}
Let $\twoG$ be a group-like 2-stack represented by the 2-crossed-module 
\begin{equation}\label{2-crossed-module}
\xymatrix@1{G \ar[r]^{\delta} & H \ar[r]^{\lambda} & K},
\end{equation}
of sheaves of groups over $\ES$. We can model $\twoG$ by the group-like 2-stack $\TORS(\oneG,K)$ of $\oneG$-torsors that become trivial over $K$ and where $\oneG \simeq \TORS(G,H)$. However this model has a downside. The commutative square that corresponds to the group-like stack morphism $\oneG \ra K$ that associates to an $(G,H)$-torsor $(P,s)$ a point $\lambda(s)$ in $K$, is not a crossed-square since the the relation $(\left(g^h\right)^k)=\left(g^k\right)^{h^k}$ where $g \in G, h \in H, k \in K$ fails to be satisfied. There exists a better model for $\twoG$. Using the connection between 2-crossed-modules and crossed squares, we model $\twoG$ by $\TORS(\oneL,\oneM)$ where $\Lambda:\oneL \ra \oneM$ is the group-like stack morphism associated to the crossed-square
\begin{equation}\label{diagram:crossed_square}
\begin{tabular}{c}
\xymatrix{L \ar[d]_{u} \ar[r]^{\alpha} \ar@{}[dr]|{\car}&M \ar[d]^v \\ N \ar[r]_{\beta} & P}
\end{tabular}
\end{equation}
obtained from the 2-crossed-module (\ref{2-crossed-module}). 

We refer to  \cite[\S 6.1]{MR1086889} for the definition of a torsor over a group-like stack. Let us remind $\TORS(\oneL,\oneM)$ from \cite[\S6.3.4]{Aldrovandi2009687}.  An object of $\TORS(\oneL,\oneM)$ consists of a pair $(\oneP,x)$, where $\oneP$ is an $\oneL$-torsor and $x:\oneL \ra \oneM$ is an $\oneL$-equivariant map with respect to $\Lambda$. A morphism between any two pairs in $\TORS(\oneL,\oneM)$ is given by the pair $(F,\gamma_F)$
\[(F,\mu_F):\xymatrix@1{(\oneP,x) \ar[rr] && (\oneQ,y)},\]
where $F: \oneP \ra \oneQ$ is an $\oneL$-torsor morphism and $\mu_F$ is a 2-morphism
\begin{equation*}\label{diagram:compatibility_of_torsor_structures}
%\begin{tabular}{c}
\xymatrix{\oneP \times \oneL \ar[d] \ar[r]^{F \times \id_{\oneL}} \ar@{}[dr]|{\Da\mu_F}& \oneQ \times \oneL \ar[d] \\ \oneP \ar[r]_{F} & \oneQ}
%\end{tabular}
\end{equation*}
expressing the compatibility of the $\oneL$-torsor structures of $\oneP$ and $\oneQ$. We also require the diagram
\begin{equation*}\label{diagram:commutative_morphism}
%\begin{tabular}{c}
\xymatrix{\oneP \ar[rr]^F \ar[dr]_{x} &\ar@{}[d]|{\kesir{\La}{\gamma_F}}& \oneQ \ar[dl]^{y}\\
          &\oneM&}
%\end{tabular}
\end{equation*}
to commute up to a 2-morphism $\gamma_F$.

A 2-morphism in $\TORS(\oneL,\oneM)$
\[\xymatrix@1{**[l](\oneP,x) \ar@/^0.5cm/[rr]^{(F,\mu_F)} \ar@/_0.5cm/[rr]_{(G,\mu_G)} \ar@{}[rr]|{\Da \Gamma}&& **[r](\oneQ,y)},\]
is given by a natural transformation $\Gamma: F \Ra G$ satisfying the equation of natural transformations 
\begin{equation*}
%\begin{tabular}{c}
\xymatrix{\oneP \times \oneL \ar@/^0.5cm/[r]^{F \times \id_{\oneL}}  \ar[r]_{G \times \id_{\oneL}}^{\Da \Gamma \times \id} \ar[d] \ar@{}[dr]|{\Da \mu_F}& \oneQ \times \oneL \ar[d] \ar@{}[dr]|{=} & \oneP \times \oneL \ar[r]^{F \times \id_{\oneL}} \ar[d] \ar@{}[dr]|{\Da \mu_G} & \oneQ \times \oneL \ar[d]\\
          \oneP \ar[r]_{G}& \oneQ & \oneP \ar[r]^{F}_{\Da \Gamma} \ar@/_0.5cm/[r]_{G} & \oneQ}
%\end{tabular}
\end{equation*}

\begin{equation*}
%\begin{tabular}{c}
\xymatrix{\oneP \ar[rr]_{G}^{\Da \Gamma} \ar@/^0.5cm/[rr]^{F} \ar[dr]_{x}&\ar@{}[d]|{\kesir{\La}{\gamma_G}}& \oneQ \ar[dl]^{y} \ar@{}[dr]|{=}& \oneP \ar[rr]^{F} \ar[dr]_x&\ar@{}[d]|{\kesir{\La}{\gamma_F}}& \oneQ \ar[dl]^y \\&\oneM&&&\oneM&}
%\end{tabular}
\end{equation*}
The tensor product on $\TORS(\oneL,\oneM)$ is similar to the tensor product in the stack case. For the definition of the contracted product of two $\oneL$-torsors, the reader can refer to \cite[\S 6.7]{MR1086889}.

A \emph{Joyal-Kock unit} in $\TORS(\oneL,\oneM)$ is an idempotent $(\oneL,\oneM)$-torsor. That is, an $(\oneL,\oneM)$-torsor $(\oneI,x)$ with an $(\oneL,\oneM)$-torsor morphism
\begin{equation*}
(\varphi,\mu_{\varphi}):\xymatrix@1{(\oneI,x) \otimes (\oneI,x) \ar[rr] && (\oneI,x)},
\end{equation*}
where $\mu_{\varphi}$ is a 2-morphism of the form (\ref{diagram:compatibility_of_torsor_structures}). In other words, with an $(\oneL,\oneM)$-torsor morphism
\begin{equation*}\label{idempotent_2}
(\varphi,\mu_{\varphi}):\xymatrix@1{(\oneI \wedge^{\oneL} \oneI,x^2) \ar[rr]&&(\oneI,x)}.
\end{equation*}
We denote these units in short by $((\oneI,x),\varphi)$.

A \emph{morphism of Joyal-Kock units} in $\TORS(\oneL,\oneM)$
\[\xymatrix@1{((\oneI,x),\varphi) \ar[rr] && ((\oneJ,y),\sigma)},\]
is given by an $(\oneL,\oneM)$-torsor morphism
\[(\psi,\mu_{\psi}):\xymatrix@1{(\oneI,x) \ar[rr] && (\oneJ,y)},\]
and a 2-morphism of $(\oneL,\oneM)$-torsors
\[\xymatrix@R=1cm@C=1cm{(\oneI \wedge^{\oneL} \oneI,x^2) \ar[r]^{\psi \wedge \psi} \ar[d]_{\varphi}& (\oneJ \wedge^{\oneL} \oneJ,y^2)\ar[d]^{\sigma}\\ (\oneI,x) \ar[r]_{\psi} & (\oneJ,y)\ar@{}[ul]|{\Ua\theta_{\psi}}}\]
We denote these morphisms by the pair $(\psi,\theta_{\psi})$.

A \emph{2-morphism of Joyal-Kock units} in $\TORS(\oneL,\oneM)$
\[\xymatrix@1{**[l]((\oneI,x),\varphi) \ar@/^0.5cm/[rr]^{(\psi,\theta_{\psi})} \ar@/_0.5cm/[rr]_{(\phi,\theta_{\phi})} \ar@{}[rr]|{\Da \Gamma}&& **[r]((\oneJ,y),\sigma)},\]
is given by a 2-morphism of $(\oneL,\oneM)$-torsors $\Gamma:\psi \Ra \phi$ satisfying the equation of 2-morphisms
\begin{equation*}\label{equation_of_2_morphisms}
%\begin{tabular}{c}
\xymatrix{(\oneI \wedge^{\oneL} \oneI,x^2) \ar@/^0.5cm/[r]^{\psi \wedge \psi} \ar[r]_{\phi \wedge \phi}^{\Da \Gamma \wedge \Gamma} \ar[d]_{\varphi} & (\oneJ \wedge^{\oneL} \oneJ,y^2) \ar[d]^{\sigma} \ar@{}[dr]|{=}&(\oneI \wedge^{\oneL} \oneI,x^2)\ar[r]^{\psi \wedge \psi} \ar[d]_{\varphi}  & (\oneJ \wedge^{\oneL} \oneJ,y^2) \ar[d]^{\sigma}\\
          (\oneI ,x)\ar[r]_{\phi}& (\oneJ,y)  \ar@{}[ul]|{\Ua\theta_{\phi}}&(\oneI,x) \ar[r]^{\psi}_{\Da \Gamma} \ar@/_0.5cm/[r]_{\phi} & (\oneJ,y) \ar@{}[ul]|{\Ua\theta_{\psi}}}
%\end{tabular}
\end{equation*}
This defines the 2-groupoid of Joyal-Kock units of a group-like 2-stack $\twoG$. We denote it by $\twounitstack(\twoG)$. 

\begin{example}\label{kernel_JK_unit}
The trivial $(\oneL,\oneM)$-torsor $(\oneL,1)$ is a Joyal-Kock unit and so is any $(\oneL,\oneM)$-torsor globally equivalent to it.
\end{example}

\subsection{Contractibility of Joyal-Kock Units}
In this section, we show that there exists a Joyal-Kock unit morphism between any two Joyal-Kock units of a group-like 2-stack $\twoG$ modeled by $\TORS(\oneL,\oneM)$ as above.

\begin{proposition}\label{the_proposition_2}
All Joyal-Kock units of $\twoG$ are equivalent up to a unique 2-isomorphism. That is, $\twounitstack(\twoG)$ is a contractible 2-groupoid over $\ES$.
\end{proposition}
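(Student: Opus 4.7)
The plan is to mimic Proposition~\ref{the_proposition} one categorical level higher, working in the model $\twoC \simeq \TORS(\oneA,C)$ with $\oneA \simeq \TORS(A,B)$. The strategy is to associate to each Joyal-Kock unit $((\oneL,x),\varphi)$ a \emph{canonical local object} of $\oneL$ that standardizes all the unit data, and then to produce the required equivalence between any two Joyal-Kock units by matching these canonical local objects.

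Concretely, first fix a local object $u$ of $\oneL$ over some $U \in \ES$. Since $\oneL$ is locally $\oneA$-equivalent to $\oneA$, there exist an object $(L_{\varphi},s_{\varphi})$ of $\oneA$ and a 2-isomorphism $\alpha_u : \varphi(u,u) \Ra u \cdot (L_{\varphi},s_{\varphi})$, both unique up to unique 2-iso; this is the 2-categorical analog of~(\ref{the_proposition_eqn_1}). The outer commutativity~(\ref{diagram:commutative_morphism}), together with the compatibility 2-cell $\gamma_{\varphi}$ of~(\ref{diagram:compatibility_of_torsor_structures}), forces $x(u)=\lambda(s_{\varphi})$ in the discrete stack $C$, paralleling~(\ref{the_proposition_eqn_2}). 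Next I would establish the 2-categorical version of the change-of-section formula~(\ref{the_proposition_eqn_5}), using the braiding $\comm$ and $\gamma_{\varphi}$, and deduce that $\tilde u := u \cdot (L_{\varphi},s_{\varphi})^{-1}$ is independent of the initial choice of $u$ up to a unique 2-isomorphism, with canonical data $x(\tilde u)=0$ and $\varphi(\tilde u,\tilde u)\Ra \tilde u$ mirroring~(\ref{the_proposition_eqn_6}). Given two Joyal-Kock units with canonical local objects $\tilde u$ and $\tilde v$, I would $\oneA$-equivariantly extend $\tilde u \mapsto \tilde v$ to a 1-morphism $\psi:\oneL\ra\oneK$; the 2-cell $\theta_{\psi}$ of Diagram~(\ref{diagram:unit_morphism}) is then read off from the canonical 2-isos $\varphi(\tilde u,\tilde u)\Ra \tilde u$ and $\sigma(\tilde v,\tilde v)\Ra \tilde v$. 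For uniqueness, any competing equivalence $(\phi,\theta_{\phi})$ must send $\tilde u$ to an object canonically 2-isomorphic to $\tilde v$, producing a uniquely determined $\Gamma:\psi\Ra\phi$ that satisfies~(\ref{equation_of_2_morphisms}).

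The principal obstacle is the second step: in Proposition~\ref{the_proposition} the canonical global section was literally unique, whereas here the canonical local object $\tilde u$ is determined only up to a specified 2-isomorphism, so its coherence with $\varphi$, with $\sigma$, and with the torsor-structure 2-cells $\gamma_{\varphi}$, $\gamma_{\sigma}$, $\gamma_{\psi}$ must be checked carefully. Once that coherence is in place, contractibility globalizes by descent: the canonical local objects glue over any hypercover to a canonically determined equivalence in $\twounitstack(\twoC)$, and Proposition~\ref{proposition:characterization_of_fibered_2_categories} together with Lemma~\ref{lemma:aux} ensure that $\twounitstack(\twoC)\ra\ES$ is a fibered 2-category in 2-groupoids with contractible fibers.
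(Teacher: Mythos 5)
Your proposal is correct and follows essentially the same route as the paper's proof: categorify Proposition~\ref{the_proposition} by extracting from $\varphi$ a canonical global section (unique up to a unique 2-isomorphism) satisfying $x(\tilde u)=0$ and $\varphi(\tilde u,\tilde u)\simeq \tilde u$, then obtain the equivalence between any two units by matching these canonical sections. The coherence issues you flag are handled in the paper at the same level of detail, namely by appeal to ``arguments similar to the proof of Proposition~\ref{the_proposition}.''
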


\begin{proof}
Let $((\oneI,x),\varphi)$ be a Joyal-Kock unit of $\twoG$ and $\ell$ be a local section of $\oneI$. There exists an $\oneL$-torsor $(P,s)_{\varphi}(\ell)$ unique up to a unique isomorphism satisfying the relation $\varphi(\ell,\ell) \simeq \ell . (P,s)_{\varphi}(\ell)$ from which the non-canonical global equivalence between the $(\oneL,\oneM)$-torsors $(\oneI,x)$ and $(\oneL,1)$ follows. By choosing another local section $\ell'$ in $\oneI$, we get a global section $s$ of the Joyal-Kock unit $((\oneI,x),\varphi)$. We define the morphism between two Joyal-Kock units $((\oneI,x),\varphi)$ and $((\oneJ,y),\sigma)$ by mapping their global sections to each other. This morphism is unique up to the choice of a global section.
\end{proof}

From the uniqueness of the 2-isomorphism in Proposition \ref{the_proposition_2}, it follows that

\begin{corollary}\label{corollary:JK-units_form_2_stack}
$\twounitstack(\twoG)$ equipped with
\[((\oneI,x),\varphi) \otimes ((\oneJ,y),\sigma):=((\oneI \wedge ^{\oneL} \oneJ,xy),\varphi \wedge \sigma),\]
where $\varphi \wedge \sigma$ is of the form (\ref{morphism:additive structure}) is a Picard 2-stack.
\end{corollary}

\subsection{The Cocyclic Description of a Joyal-Kock Unit}
Let $((\oneI,x),\varphi)$ be a Joyal-Kock unit of a group-like 2-stack $\twoG$ modeled by $\TORS(\oneL,\oneM)$ over $U$ and $V_{\bullet} \ra U$ be a hypercover. Upon choosing a local section $\ell \in \oneI(V_0)$ and a simple cocycle calculation shows that the collection $(f,(P,s),g,(Q,t),(K,u))$ where $f \in \oneL(V_2)$, $(P,s) \in \oneL(V_1)$, $g \in \oneL(V_1)$, $(Q,t) \in \oneL(V_0)$, and $(K,u) \in \oneM(V_0)$ satisfying the relations 
\begin{equation}\label{cocycle_6}
d_0^*(u)=d_1^*(u)\beta(s),
\end{equation}
\begin{equation}\label{equation:image_of_sections_4}
g: d_0^*(Q,t)  \simeq  d_1^*(Q,t)(P,s),
\end{equation}
\begin{equation}\label{cocycle_10}
d_2^*(g)d_0^*(g)=fd_1^*(g),
\end{equation}
\begin{equation}\label{cocycle_7}
\beta(t)=u,
\end{equation}
plus the coherence condition on $f$ when it is pulled back to $V_3$ describes a Joyal-Kock unit in $\TORS(\oneL,\oneM)$. We note that this collection is a 1-cocycle with coefficients in the morphism of group-like stacks

\begin{equation}\label{cocycle_8}
(\id^{-1}_{\oneL},\Lambda):\xymatrix@1{\oneL \ar[r] & \TORS(\ker(\id_M\alpha),\ker(\id_P\beta))}.
\end{equation}
We refer to \cite[\S6.1]{MR2387582} for a detailed treatment of cocycles with coefficients in a group-like stack morphism. We shall describe $\TORS(\ker(\id_M\alpha),\ker(\id_P\beta))$. Using the notation in section \ref{section:joyal_kock_units_in_picard_2_stack}, there exists a group-like stack morphism $(\Lambda,\id_{\oneM}):\oneL \rtimes \oneM \ra \oneM$ which corresponds to the commutative square
\begin{equation*}
\xymatrix{**[l]L \rtimes M \ar[r]^{\id_M\alpha} \ar[d]_{(u,v)} & M \ar[d]^{v} \\ **[l]N \rtimes P \ar[r]_{\id_P\beta} & P}
\end{equation*}
where $\oneL \rtimes \oneM$ is the group-like stack associated to the crossed-module $(u,v):L \rtimes M \ra N \rtimes P$ the semi-direct product of the two vertical crossed-modules in diagram (\ref{diagram:crossed_square}). We send the reader to the Appendix for the details of the semi-direct product. $\TORS(\ker(\id_M\alpha),\ker(\id_P\beta))$ is the group-like stack associated to the crossed-module $(u,v):\ker(\id_M\alpha) \ra \ker(\id_P\beta)$. If we choose another local section $\ell' \in \oneI(V_0)$, we find a cohomologous 1-cocycle. Therefore the classes of 1-cocycles with values in the morphism (\ref{cocycle_8}) classify up to equivalence Joyal-Kock units. We denote this set of classes of cocycles by $\mathrm{H}^0(*,\oneL \ra \TORS(\ker(\id_M\alpha),\ker(\id_P\beta))$. This cohomology set makes sense because of the following observation

\begin{proposition}
If $\Lambda:\oneL \ra \oneM$ is a group-like stack morphism that corresponds to a crossed-square, then so is $(\id^{-1}_{\oneL},\Lambda):\oneL \ra \TORS(\ker(\id_M\alpha),\ker(\id_P\beta))$.
\end{proposition}

\begin{proof}
The square corresponding to the group-like stack morphism $(\id^{-1}_{\oneL},\Lambda)$
\begin{equation*}\label{diagram:crossed_square_1}
%\begin{tabular}{c}
\xymatrix{L \ar[d]_{u} \ar[r]^{(\id_L^{-1},\alpha)}&**[r]\ker(\id_{M}\alpha) \ar[d]^{(u,v)} \\ N \ar[r]_{(\id_N^{-1}, \beta)} & **[r]\ker(\id_{P}\beta)}
%\end{tabular}
\end{equation*}
endowed with the data 
\begin{itemize}
\item the action of $\ker(\id_P\beta)$ on $\ker(\id_M\alpha)$ is the relation (\ref{semi_direct_product_app}) in the appendix;
\item the action of $\ker(\id_P\beta)$ on $N$ is $n'^{(n,p)}:=n'^{n}$ for every $n,n' \in N$, $p \in P$;
\item the action of $\ker(\id_P\beta)$ on $L$ is $l^{(n,p)}:=l^{u(n)}$ for every $l \in L$, $n \in N$, $p \in P$;
\item the map $\psi:\ker(\id_M\alpha) \times N \ra L$ is defined by $\psi((l,m),n):=\phi((m^{-1})^{\beta(n)},n^{-1})^{m}$ for every $l \in L$, $m \in M$, $n \in N$ and with $\phi:M \times N \ra L$ the map of the crossed square (\ref{crossed_square_app}),
\end{itemize}
is a crossed-square. We leave it to the reader to verify that these data satisfy the conditions CS-1 to CS-9 given in the Appendix.
\end{proof}

\subsection{A Complex of Sheaves defining the 2-Stack of Joyal-Kock Units}
Since $\twounitstack(\twoG)$ is a contractible Picard 2-stack, it is represented by the class of zero complex $0 \ra 0 \ra 0$. By \cite{Aldrovandi_Noohi}, any length 3 complex quasi-isomorphic to the zero complex provides other representations of $\twounitstack(\twoG)$. Here we mention two such complexes that arise naturally in the discussion of Joyal-Kock units that are clearly trivial:
\begin{enumerate}
\item The elements of the set $\mathrm{H}^0(*,\oneL \ra \ker(\id_M\alpha),\ker(\id_P\beta))$ with coefficients in the stack morphism (\ref{cocycle_8}) classify up to equivalence the Joyal-Kock units of $\TORS(\oneL,\oneM)$. From \cite[Proposition 6.2]{MR2387582}, $\mathrm{H}^0(*,\oneL \ra \ker(\id_M\alpha),\ker(\id_P\beta))$ can be identified with set of the 1-cocycles with coefficients in  
\begin{equation}\label{the_complex_2}
\xymatrix@1@C=30pt{L \ar[rr]^(0.35){((\id_L,\alpha^{-1}),u)}  && \ker(\id_M\alpha) \rtimes N \ar[rr]^(0.55){((u,v)(\id_N^{-1},\beta))}&& \ker(\id_P\beta)},
\end{equation}
the cone of (\ref{cocycle_8}). By \cite[Corollary 3.5]{MR2037767}, the complex (\ref{the_complex_2}) is a 2-crossed-module. Moreover it is an exact sequence and hence quasi-isomorphic to the zero complex. The 2-stack associated to it represents the 2-stack of Joyal-Kock units $\twounitstack(\twoG)$.
\item From Proposition \ref{the_proposition_2} and Example \ref{kernel_JK_unit}, we deduce that the identity morphism on $\TORS(\ker(\alpha), \ker(\beta))$ represents $\twounitstack(\twoG)$. Hence, so is its cone
\begin{equation*}\label{the_complex_2_bis}
\xymatrix@1{\ker{(\alpha)} \ar[rr]^(0.4){(\id_{\ker(\alpha)}^{-1},u)}&& \ker(\alpha) \rtimes \ker(\beta) \ar[rr]^(0.6){v\id_{\ker(\beta)}} && \ker(\beta)}.
\end{equation*}
\end{enumerate}
\begin{remark}
If $\twoA$ is a Picard 2-stack, then $\twoA$ can be represented by class of length 3 complex of abelian sheaves $(\delta,\lambda):A \ra B \ra C$. As all the actions are trivial, the commutative square
\begin{equation}\label{diagram:crossed_square_pic}
\begin{tabular}{c}
\xymatrix{A \ar[r] \ar[d]_{\delta} & 0 \ar[d]\\B \ar[r]_{\lambda} & C}
\end{tabular}
\end{equation}
is trivially a crossed-square. Upon repeating the above arguments with (\ref{diagram:crossed_square_pic}), we find that the complex
\begin{equation*}
\xymatrix@1@C=30pt{A \ar[rr]^(0.45){(\id_A,\delta)} && A \oplus B \ar[rr]^(0.42){(\delta,0)+(-\id_B,\lambda)} && \ker(\id_C+\lambda)},
\end{equation*}
represents $\twounitstack(\twoA)$.
\end{remark}

\begin{remark}
We know how to associate a group-like 2-stack to a 2-crossed module, say $(\delta,\lambda):G \ra H \ra K$(see \cite{noohi-2005}). Reciprocally, associating a 2-crossed-module to a group-like 2-stack is also possible whose details will be given in \cite{Aldrovandi_Noohi}. We also remark that it is natural to expect that these results extend in a similar way to $n$-stacks that are at least group-like.
\end{remark}

\subsection{Monoidal Case}\label{2_monoidal_case}
Let $\twoM$ be a monoidal 2-stack fibered in 2-groupoids over $\ES$. A \emph{Joyal-Kock unit} of $\twoM$ over $U \in \ES$ is a pair $(e,\varphi)$ where $e$ is a cancelable object, i.e. $-\otimes e, e \otimes -:\twoM \ra \twoM$ are fully-faithful and $\varphi:ee \ra e$ is a weakly-invertible 1-morphism. As observed in section \ref{monoidal_case}, the fully-faithful 2-functors $-\otimes e, e \otimes -$, that is inducing equivalence on the hom-categories, are in fact biequivalences and if $\twoG$ is the sub-2-stack of cancelable objects of $\twoM$ which is trivially group-like and can be represented by a 2-crossed-module (see \cite{Aldrovandi_Noohi}), then the 2-stack $\unitstack(\twoM)$ of Joyal-Kock units of $\twoM$ and the 2-stack $\unitstack(\twoG)$ of Joyal-Kock units of $\twoG$ coincide. Hence, by Proposition \ref{the_proposition_2}
\begin{corollary}
$\unitstack(\twoM)$ is contractible 2-groupoid over $\ES$. Hence, it is a Picard 2-stack over $\ES$.
\end{corollary}

This Corollary is sheafification of (\cite[Theorem C]{MR2388233}). As in section \ref{monoidal_case}, it proves the contractibility of units of a monoidal 2-stack $\twoM$ without referring to the connection between the classical units and Joyal-Kock units but instead using the complex which represents the group-like 2-stack of cancelable objects of $\twoM$.

\appendix
\section{}\label{Annex}
For the convenience of the reader, we recall some definitions and results related to 2-crossed-modules and crossed squares from \cite{MR2037767}, \cite{MR772056}, and \cite{MR1087375}.

A \emph{2-crossed-module}
\begin{equation*}\label{equation:complex_length_app}
\xymatrix@1{G \ar[r]^{\delta} & H \ar[r]^{\lambda} & K},
\end{equation*}
is a sequence of groups equipped with a right action of $K$ on $H$ and $G$, a right action of $H$ on $G$, and a function $\{~,~\}:H \times H \ra G$ called bracket operation satisfying the axioms
\begin{enumerate}[(2CM-1)]
\item $\delta$ and $\lambda$ are $K$-equivariant where $K$ acts on itself by conjugation;
\item $\delta\{h_0,h_1\}=h_0^{-1}h_1^{-1}h_0h_1^{\lambda h_0}$ for every $h_0,h_1 \in H$;
\item $\{\delta g,h\}=g^{-1}g^h$ for every $g \in G$ and $h \in H$;
\item $\{h,\delta g\}=(g^{-1})^{h}g^{\lambda h}$ for every $g \in G$ and $h \in H$;
\item $\{h_0,h_1h_2\}=\{h_0,h_1\}^{h_0^{-1}h_2h_0}\{h_0,h_2\}$ for every $h_0,h_1,h_2 \in H$;
\item $\{h_0h_1,h_2\}=\{h_1,h_0^{-1}h_2h_0\}\{h_0,h_2\}^{\lambda h_1}$ for every $h_0,h_1,h_2 \in H$;
\item $\{h_0,h_1\}^k=\{h_0^k,h_1^k\}$ for every $h_0,h_1 \in H$ and $k \in K$.
\end{enumerate}

A \emph{crossed-square} is a commutative diagram of groups and group homomorphisms
\begin{equation}\label{crossed_square_app}
\begin{tabular}{c}
\xymatrix{L \ar[r]^{\alpha} \ar[d]_{u} \ar@{}[dr]|{\car}& M \ar[d]^v\\N \ar[r]_{\beta}&P}
\end{tabular}
\end{equation}
equipped with a right action of $P$ on $L$, on $M$, and on $N$, a function $\phi:M \times N \ra L$ satisfying the axioms
\begin{enumerate}[(CS-1)]
\item $u$ and $\alpha$ are $P$-equivariant;
\item $v$, $\beta$, $v \circ \alpha$, and $\beta \circ u$ are crossed-modules;
\item $\alpha \circ \phi (m,n)=(m^{-1})^{\beta(n)}m$ for every $m \in M$, $n \in N$;
\item $u \circ \phi(m,n)=n^{-1}n^{v(m)}$ for every $m \in M$, $n \in N$;
\item $\phi(\alpha(l),n)=(l^{-1})^{\beta(n)}$l for every $l \in L$, $n \in N$;
\item $\phi(m,u(l))=l^{-1}l^{v(m)}$ for every $l \in L$, $m \in M$;
\item $\phi(m_0m_1,n)=\phi(m_1,n)\phi(m_0,n)^{v(m_1)}$ for every $m_0,m_1 \in M$, $n \in N$;
\item $\phi(m,n_0n_1)=\phi(m,n_0)^{\beta(n_1)}h(m,n_1)$ for every $m \in M$, $n_0,n_1 \in N$;
\item $\phi(m,n)^p=\phi(m^p,n^p)$.
\end{enumerate}
It follows from the above axioms that the homomorphisms $u$ and $\alpha$ are crossed-modules, as well.
\begin{corollary*}(\cite[Corollary 3.5]{MR2037767})
The cone of the crossed-square (\ref{crossed_square_app}) 
\begin{equation*}\label{2-crossed-module_app}
\xymatrix@1{L \ar[r]^(0.4){\partial_2} & M \rtimes N \ar[r]^(0.6){\partial_1} & P,}
\end{equation*}
is a 2-crossed-module where
\begin{itemize}
\item $\partial_1(m,n)=\beta(n)v(m)$ and $\partial_2(l)=(\alpha(l)^{-1},u(l))$ for every $m \in M$, $n \in N$, $l \in L$;
\item the action of $P$ on $M \rtimes N$ is $(m,n)^p:=(m^p,n^p)$ for every $m \in M$, $n \in N$, $p \in P$;
\item the action of $P$ on $L$ is the one that comes with the crossed square (\ref{crossed_square_app});
\item the action of $M \rtimes N$ on $L$ is $l^{(m,n)}:=l^{\beta(n)}$ for every $l \in L$, $n \in N$, $m \in M$;
\item the bracket operation $\{~,~\}:(M \rtimes N) \times (M \rtimes N) \ra L$ is given by $\{(m_1,n_1),(m_2,n_2)\}:=\phi(m_1,n_1^{-1}n_2n_1)\}$ for every $m_i \in M$, $n_i \in N$.
\end{itemize}
\end{corollary*}
Let $u:L \ra N$ and $v: M \ra P$ be the two vertical crossed-modules in the crossed-square (\ref{crossed_square_app}). The \emph{semi-direct product of $u$ and $v$}  is the crossed-module $(u,v):L \rtimes M \ra N \rtimes P$ where the action of $N \rtimes P$ on $L \rtimes M$ is given by 
\begin{equation}\label{semi_direct_product_app}
(l,m)^{(n,p)}:=(\phi(m^p,n)^{-1}l^{p\beta(n)},m^p).
\end{equation}
This definition of semi-direct product is deduced form the definition of the semi-direct product of any two crossed-modules where one acts on the other. We refer to \cite{MR1087375} for the details of actions of crossed-modules.

\bibliographystyle{plain}

\end{document}